\newtheorem*{theorem*}{Theorem}
\newtheorem*{theoremA*}{Theorem A}
\newtheorem{lemma}{Lemma}
\newtheorem*{lemma*}{Lemma}
\newtheorem{corollary}{Corollary}
\newtheorem*{corollary*}{Corollary}
\theoremstyle{definition}
\theoremstyle{remark}
\newtheorem{remark}{Remark}
\def\scal#1#2{\langle #1, #2\rangle}
\def\R#1{\mathbb{R}^{#1}}
\DeclareMathOperator{\Hess}{\mathrm{Hess}}
\begin{document}

\bigskip
\bigskip
\bigskip

\centerline{\textbf{A NOTE ON J\"ORGENS-CALABI-POGORELOV THEOREM\footnote{Translated from Soviet. Math. Dokl. (Communicated  by A.V. Pogorelov December 14, 1993), Vol.340, N.~3, p.317-318, see also \textbf{MR1328274 (96e:53100)}}}}

\bigskip
\centerline{\textbf{Tkachev Vladimir G.}}

\bigskip
\bigskip


%
%

1. Let $S_k(A)$ denote the $k$th principal symmetric function of the eigenfunctions of an $n\times n$ matrix $A$, i.e.
$$
\det (A+tI)=\sum_{k=0}^n S_k(A)t^{n-k}.
$$
The following classical result is well known.

\begin{theoremA*}[J\"orgens-Calabi-Pogorelov,  \cite{Jorg}, \cite{Ca1}, \cite{Pogor64}]
Let $f(x)$ be a convex entire solution  of
$$
S_n(\Hess f)\equiv \det (\Hess f)=1,\quad x\in \R{n},
$$
where $\Hess f$ is the Hessian matrix of $f(x)=f(x_1,\ldots,x_n)$. Then $f(x)$ is a quadratic polynomial, i.e.
\begin{equation}\label{eq1}
f(x)=a+\scal{b}{x}+\scal{x}{Ax},
\end{equation}
where $A$ is an $n\times n$ matrix with constant real coefficients and $\scal{}{}$ stands for the scalar product in $\R{n}$.
\end{theoremA*}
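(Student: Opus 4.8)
\medskip

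\noindent\textit{A proof proposal.} The plan is to use two structural features of the equation: the pointwise positivity $\Hess f>0$ forced by $\det\Hess f=1$ (so $f$ is strictly convex and the gradient map is a local diffeomorphism), and the invariance of $\det\Hess f=1$ under the unimodular affine group $SL(n,\mathbb{R})$. I would begin with a Lewy-type change of variables. Since $f$ is convex, $\Phi(x)=x+\nabla f(x)$ has differential $I+\Hess f\ge I$ and satisfies $\langle\Phi(x)-\Phi(y),\,x-y\rangle\ge\abs{x-y}^{2}$, so $\Phi$ is an injective proper local diffeomorphism, hence a diffeomorphism of $\R{n}$ onto itself. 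On the (simply connected) target there is then a potential $g$ with $\nabla g\circ\Phi=x-\nabla f$, and
\[
\Hess g=(I+\Hess f)^{-1}(I-\Hess f),
\]
a symmetric matrix whose eigenvalues are the Cayley transforms $\mu_{i}=(1-\lambda_{i})/(1+\lambda_{i})\in(-1,1)$ of the eigenvalues $\lambda_{i}>0$ of $\Hess f$; in particular $\lVert\Hess g\rVert<1$ and $\Hess f=(I-\Hess g)(I+\Hess g)^{-1}$.

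For $n=2$ this already finishes the proof --- this is essentially Jörgens' argument. Using $S_{2}(\Hess f)=\det\Hess f=1$ and the Cayley--Hamilton identity $(\Hess f)^{2}=(\trace\Hess f)\,\Hess f-I$, one computes
\[
\Hess g=\frac{(\trace\Hess f)\,I-2\,\Hess f}{2+\trace\Hess f},
\]
which is \emph{trace free}. Hence $g_{11}=-g_{22}$, i.e.\ $g$ is harmonic on $\R{2}$; each entry of $\Hess g$ is then a bounded harmonic function, hence constant by Liouville's theorem. So $\Hess g$, and with it $\Hess f=(I-\Hess g)(I+\Hess g)^{-1}$, is constant, which is \eqref{eq1}.

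For $n\ge3$ the trace-free miracle disappears, and I would run the affine-differential-geometric scheme of Calabi and Pogorelov. The graph of $f$ is an improper affine sphere whose affine (Blaschke) metric is $G=\sum_{i,j}f_{ij}\,dx^{i}dx^{j}$; since $\det\Hess f\equiv1$ the cofactor matrix of $\Hess f$ has divergence-free rows, so the Laplace--Beltrami operator of $G$ coincides with the linearized Monge--Amp\`ere operator $L=\sum_{i,j}f^{ij}\partial_{ij}$, where $f^{ij}=(\Hess f)^{-1}_{ij}$. Differentiating $\log\det\Hess f\equiv0$ once gives $\sum_{i,j}f^{ij}f_{ijk}=0$, and a second differentiation followed by summation over $k$ gives $L(\Delta f)=\sum_{k}\lVert(\Hess f)^{-1/2}(\partial_{k}\Hess f)(\Hess f)^{-1/2}\rVert^{2}\ge0$, with equality iff $\Hess f$ is constant; more sharply, the squared $G$-norm $\Phi$ of the cubic form $(f_{ijk})$ obeys Calabi's Bochner-type inequality $\Delta_{G}\Phi\ge c_{n}\Phi^{2}$ for some $c_{n}>0$ (this uses that the sphere is improper). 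I would then establish the a priori bound $\lambda I\le\Hess f\le\Lambda I$ on all of $\R{n}$, using Pogorelov's interior second-derivative estimate together with John's lemma and the $SL(n,\mathbb{R})$-invariance to normalise the sections $\{f<t\}$ as $t\to\infty$; this makes $(\R{n},G)$ complete, with Ricci bounded below and $\Phi$ bounded, so the generalized maximum principle at infinity applied to $\Phi$ forces $\Phi\equiv0$. Then all $f_{ijk}\equiv0$, $\Hess f$ is constant, and \eqref{eq1} follows.

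The hard part is exactly the a priori bound $\lambda I\le\Hess f\le\Lambda I$ --- equivalently, control of the asymptotic shape of the level sets of $f$, or the completeness of the affine metric $G$. This is where Calabi's original method works only in low dimensions and where Pogorelov's interior estimates are indispensable; the differential identities of the previous paragraph, the final maximum-principle step, and the entire two-dimensional case are, by comparison, routine.
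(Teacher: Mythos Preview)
The paper does not prove Theorem~A at all; it is quoted as the classical J\"orgens--Calabi--Pogorelov theorem with references to the original sources \cite{Jorg}, \cite{Ca1}, \cite{Pogor64}, and serves purely as background motivation for the author's own Theorem about the more general operator~\eqref{eq2}. So there is no ``paper's own proof'' to compare your proposal against.

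That said, your sketch is a faithful outline of the historical route encoded in those three references: the Lewy rotation reducing the $n=2$ case to Liouville's theorem for bounded harmonic functions (J\"orgens), the affine-sphere interpretation with Calabi's Bochner-type inequality for the cubic form, and the need for Pogorelov's interior $C^{2}$ estimate to obtain completeness of the Blaschke metric in all dimensions. You have correctly isolated the genuinely hard step (the global two-sided Hessian bound, equivalently completeness of $G$), and you are right that Calabi's original argument handled only $n\le5$ while Pogorelov's estimate removed the dimensional restriction. As a proof \emph{proposal} this is sound; as a proof it is of course only a roadmap, since both the Pogorelov estimate and the John-ellipsoid normalisation of sections require substantial work that you have not reproduced. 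But none of that work appears in the present paper either.
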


Let us consider the operator
\begin{equation}\label{eq2}
L[f]\equiv \sum_{i=1}^n a_i(x)S_i (\Hess f)=0.
\end{equation}
In  \cite{Borisenko}, A.A.~Borisenko has established that affine functions $f(x)=a+\scal{b}{x}$ are the only entire convex solutions of (\ref{eq2}) with the linear growth (i.e. $f(x)=O(\|x\|)$ as $x\to \infty$) in the following special cases, namely, when
\begin{equation}\label{eq3}
L[f]=S_n(\Hess f)-S_1(\Hess f)=\det \Hess f-\Delta f=0
\end{equation}
and
\begin{equation}\label{eq4}
L[f]=\sum_{k=0}^{[\frac{n-1}{2}]} (-1)^k S_{2k+1}(\Hess f)=0.
\end{equation}
Notice that solutions to (\ref{eq3}) and (\ref{eq4}) describe special  Lagrangian submanifolds given a non-parametric form.

Let us consider the following condition.

\begin{itemize}
\item[(Q)] either $a_k(x)\equiv 0$ on $\R{n}$, or there exist two positive constants $\mu_1\le \mu_2$ such that $\mu_1\le |a_k(x)|\le \mu_2$.
\end{itemize}

Let us denote by $J=J(L)$ the set of indices $i$, $1\le i\le n$ such that $a_i(x)\not\equiv 0$. The main purpose of this note is to establish the following generalization of \cite{Borisenko}.

\begin{theorem*}
Let $f(x)$ be an entire convex $C^2$-solution of (\ref{eq2}) and that the structural condition (Q) is satisfied. If
\begin{equation}\label{eq5}
\lim\sup_{\|x\|\to \infty}\frac{|f(x)|}{\|x\|^2}=0
\end{equation}
then $S_i (A(x))\equiv 0$ for any $i\in J$, in particular, $\det \Hess f(x)=0$. If additionally $a_1(x)\not \equiv 0$ then $f(x)$ is an affine function.
\end{theorem*}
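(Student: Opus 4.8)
The plan is to reduce the whole statement to the single assertion $S_{k_0}(\Hess f)\equiv 0$, where $k_0:=\min J$ (one may assume $J\neq\emptyset$, the operator being nontrivial). Write $A(x):=\Hess f(x)$; by convexity $A(x)$ is positive semidefinite, so all its eigenvalues are $\ge 0$ and $S_i(A(x))\ge 0$. If $S_{k_0}(A(x))=0$ then $\rank A(x)\le k_0-1$, hence $S_i(A(x))=0$ for every $i\ge k_0$ — in particular for all $i\in J$ and for $i=n$ — and if moreover $a_1\not\equiv 0$ then $k_0=1$, so $A(x)\equiv 0$ and $f$ is affine. So everything rests on the spectral claim, which I would prove in three steps.

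Step 1: a pointwise gap for $S_{k_0}$. Suppose $S_{k_0}(A(x))>0$ at some $x$, and let $r:=\rank A(x)\ge k_0$; since the eigenvalues are $\ge0$, $S_i(A(x))>0$ exactly for $1\le i\le r$, and $(\ref{eq2})$ becomes $\sum_{i\in J,\,i\le r}a_i(x)S_i(A(x))=0$ with all summands nonzero and $\mu_1\le|a_i(x)|\le\mu_2$. This is impossible if $J\cap[1,r]=\{k_0\}$, and otherwise the coefficients $a_i(x)$, $i\in J\cap[1,r]$, cannot all be of one sign; moving the negative terms to the other side of the equation and using $\mu_1\le|a_{k_0}(x)|$ gives $\mu_1 S_{k_0}(A(x))\le\mu_2\sum_{i\in J,\,i>k_0}S_i(A(x))$ (the indices on the right exceed $k_0$ because $k_0=\min J$). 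By Maclaurin's inequalities for non-negative reals, $S_i(A)\le\binom{n}{i}\bigl(S_{k_0}(A)/\binom{n}{k_0}\bigr)^{i/k_0}$ for $i\ge k_0$; substituting this and dividing by $S_{k_0}(A(x))>0$ yields
\[
\mu_1\le\mu_2\sum_{i\in J,\,i>k_0}\binom{n}{i}\binom{n}{k_0}^{-i/k_0}S_{k_0}(A(x))^{\,i/k_0-1}=:\psi\bigl(S_{k_0}(A(x))\bigr).
\]
Every exponent $i/k_0-1$ is positive, so $\psi$ is continuous and strictly increasing on $(0,\infty)$ with $\psi(0^+)=0$; hence $S_{k_0}(A(x))\ge c_*$ for a constant $c_*=c_*(n,J,\mu_1/\mu_2)>0$. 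Conclusion: $S_{k_0}(\Hess f)(x)\in\{0\}\cup[c_*,\infty)$ for every $x\in\R{n}$.

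Step 2 (globalization) and Step 3 (killing the positive branch). The function $x\mapsto S_{k_0}(\Hess f)(x)$ is continuous and $\R{n}$ is connected, so its range, being a connected subset of $\{0\}\cup[c_*,\infty)$, lies entirely in $\{0\}$ or entirely in $[c_*,\infty)$. In the first case we are done. In the second, Maclaurin's inequality again gives $\Delta f=S_1(\Hess f)\ge n\bigl(S_{k_0}(\Hess f)/\binom{n}{k_0}\bigr)^{1/k_0}\ge n\bigl(c_*/\binom{n}{k_0}\bigr)^{1/k_0}=:c_1>0$ on all of $\R{n}$. This contradicts $(\ref{eq5})$: from $(\ref{eq5})$ and convexity one gets $|f(x)|\le\varepsilon\|x\|^2+C_\varepsilon$ for every $\varepsilon>0$, and then, comparing $f$ with its tangent hyperplane along the direction $\nabla f(x)/|\nabla f(x)|$, that $|\nabla f(x)|=o(\|x\|)$ as $\|x\|\to\infty$; hence
\[
0\le\int_{B_R}\Delta f\,dx=\int_{\partial B_R}\partial_\nu f\,d\sigma\le|\partial B_R|\,\sup_{\partial B_R}|\nabla f|=o(R^{n}),
\]
which is incompatible with $\int_{B_R}\Delta f\,dx\ge c_1|B_R|$. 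Therefore $S_{k_0}(\Hess f)\equiv 0$.

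The step carrying the real content is Step 1: condition (Q) keeps the positive- and negative-sign coefficients comparable, while Maclaurin's inequalities bound the higher symmetric functions $S_i$, $i>k_0$, by powers of $S_{k_0}$ with exponents strictly greater than $1$, and it is precisely the interaction of these two facts that manufactures the spectral gap $\{0\}\cup[c_*,\infty)$. Everything afterwards is soft — connectedness turns the gap into a dichotomy, and the sub-quadratic growth $(\ref{eq5})$ (which forces $\int_{B_R}\Delta f\,dx=o(R^n)$) removes the branch in which $f$ would be, in an averaged sense, uniformly convex. Points to treat carefully: the bookkeeping of $r=\rank A(x)$, so that the "restricted" equation only involves indices $i\le r$ for which $S_i(A(x))>0$; the degenerate possibility $J=\{k_0\}$; and the elementary but not entirely automatic growth estimates $|f|\le\varepsilon\|x\|^2+C_\varepsilon$ and $|\nabla f|=o(\|x\|)$.
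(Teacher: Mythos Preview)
Your proof is correct, and its backbone coincides with the paper's: the spectral gap for $S_{k_0}(\Hess f)$ obtained from condition~(Q) together with Maclaurin's inequalities (your Step~1) and the connectedness argument (your Step~2) are exactly the content of the paper's Lemma~1; your choice $k_0=\min J$ is precisely the index $i_1$ that appears there. The proofs diverge only in the final step, where the branch $S_{k_0}(\Hess f)\ge c_*$ is eliminated. The paper uses a pointwise barrier: after an affine normalization so that $f\ge 0$, for each $\epsilon>0$ the function $g(x)=\tfrac{\epsilon}{2}\|x\|^2-f(x)$ tends to $+\infty$ by~(\ref{eq5}), hence attains a minimum at some $x_0$, where $\Hess g(x_0)\ge 0$ gives $\Hess f(x_0)\le \epsilon I$ and thus $S_k(\Hess f(x_0))\le \binom{n}{k}\epsilon^k$, contradicting the uniform lower bound. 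Your Step~3 instead deduces $\Delta f\ge c_1>0$ and reaches a contradiction via the divergence theorem and the auxiliary gradient estimate $|\nabla f|=o(\|x\|)$. The barrier argument is shorter and avoids any growth estimate on $\nabla f$; your integral argument is a legitimate alternative and is perhaps more robust in that it uses~(\ref{eq5}) only in an averaged sense.
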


\begin{remark}
We construct an example in paragraph~4 below which shows that (\ref{eq5}) is optimal in the sense that there exist operators $L$ satisfying the condition (Q) and possessing solutions growing quadratically $f(x)\sim \|x\|^2$ as $x\to \infty$ and such that $\Hess f(x)\not\equiv 0$.
\end{remark}

2. We use the standard convention to write $A\ge B$ if $A-B$ is a positive semi-definite matrix. 

\begin{lemma}\label{lem1}
Let $A(x)\ge 0$ be a continuous $n\times n$ matrix solution of
\begin{equation}\label{eq6}
L(A(x))\equiv \sum_{i=1}^n a_i(x)S_i (A(x))=0, \qquad x\in \R{n},
\end{equation}
where $L$ is subject to the condition (Q). Then either $S_i (A(x))\equiv 0$ for any $i\in J$, or there exists $k\in J$ and a constant $\sigma_0$ depending on $\mu_1$ and $\mu_2$ such that for all $x\in \R{n}$ the inequality holds
$$
S_k(A(x))\ge \sigma_0>0.
$$
\end{lemma}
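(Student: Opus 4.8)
The plan is to combine two standard facts about positive semi-definite matrices with the pointwise identity \eqref{eq6} and a connectedness argument on $\R n$. The facts are: (i) if $A\ge0$ then $S_j(A)\ge0$ for all $j$, with $S_j(A)>0$ exactly when $j\le\rank A$; and (ii) Maclaurin's inequalities, which I will use in the form that there are constants $c_{j,k},c'_{j,k}>0$ depending only on $n$ with $S_k(A)\le c_{j,k}S_j(A)^{k/j}$ and $S_j(A)\ge c'_{j,k}S_k(A)^{j/k}$ whenever $j\le k\le\rank A$. For the reductions: by (Q) each $a_k$ with $k\in J$ is continuous and nowhere zero, hence has a constant sign $\epsilon_k\in\{\pm1\}$; splitting $J=J^+\sqcup J^-$ accordingly, \eqref{eq6} becomes $\sum_{k\in J^+}|a_k(x)|S_k(A(x))=\sum_{k\in J^-}|a_k(x)|S_k(A(x))$. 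If $J^-=\emptyset$ (the case $J^+=\emptyset$ being symmetric), the right-hand side is empty, so $\sum_{k\in J}|a_k(x)|S_k(A(x))=0$; since $|a_k|\ge\mu_1$ and $S_k(A(x))\ge0$, this forces $S_k(A(x))\equiv0$ for all $k\in J$ — the first alternative. So assume $J^+,J^-\ne\emptyset$; since $J^+\cap J^-=\emptyset$ we have $\min J^+\ne\min J^-$, and after replacing $L$ by $-L$ if needed we may take $\min J^+<\min J^-$. Set $m=\min J=\min J^+$ and $c=\min J^-$, so $m<c$ and $c\in J$.

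First main step (a rank gap). Fix $x$ and put $r=\rank A(x)$; in \eqref{eq6} only indices $k\in J$ with $k\le r$ contribute, each with fixed sign $\epsilon_k$ and modulus $\ge\mu_1 S_k(A(x))>0$. If $m\le r<c$ then no index of $J^-$ is $\le r$, so the right side of the rewritten \eqref{eq6} vanishes while the left side is strictly positive (e.g.\ the index $m\in J^+$ contributes, as $m\le r$) — a contradiction. Hence for every $x$ either $r<m$ or $r\ge c$; equivalently, $\{x:S_m(A(x))>0\}=\{x:S_c(A(x))>0\}$.

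Second main step (a uniform estimate — the delicate point). I claim that at every $x$ with $S_c(A(x))>0$ one actually has $S_c(A(x))\ge\sigma_0$ for some $\sigma_0=\sigma_0(\mu_1,\mu_2)>0$. Fix such an $x$ and write $\delta=S_c(A(x))$; then $r\ge c>m$, so $S_m(A(x))>0$ as well. Every $k\in J^-$ satisfies $k\ge c$, hence $S_k(A(x))\le c_{c,k}\delta^{k/c}$ (by (ii) when $c\le k\le r$, trivially when $k>r$); if $\delta\le1$ then each $\delta^{k/c}\le\delta$, so the right side of \eqref{eq6} is $\le C_1(\mu_2,n)\,\delta$. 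The left side is $\ge\mu_1 S_m(A(x))\ge\mu_1 c'_{m,c}\delta^{m/c}$ by (ii). Comparing yields $\delta^{m/c-1}\le C_2(\mu_1,\mu_2,n)$, and since $m/c-1<0$ this forces $\delta\ge\sigma_1(\mu_1,\mu_2)>0$; together with the trivial case $\delta>1$ we get $S_c(A(x))\ge\sigma_0:=\min(1,\sigma_1)$, proving the claim. Thus the continuous function $x\mapsto S_c(A(x))$ has range contained in $\{0\}\cup[\sigma_0,\infty)$, so by connectedness of $\R n$ it is either $\equiv0$ or $\ge\sigma_0$ everywhere. In the former case the rank gap gives $\{x:S_m(A(x))>0\}=\emptyset$, so $S_m\equiv0$, and since $J\subseteq\{m,m+1,\dots,n\}$ this means $S_i(A(x))\equiv0$ for all $i\in J$ — the first alternative; in the latter case we are done with $k=c\in J$.

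The only genuinely non-trivial point is the uniform estimate. Its mechanism is that Maclaurin's inequalities bound each high-index function $S_k$ ($k\ge c$) from above by a super-linear power of $S_c$ and bound the low-index $S_m$ from below by a sub-linear power of $S_c$, whereas \eqref{eq6} couples a low-index term on one side to purely high-index terms on the other; the mismatch of homogeneities is exactly what keeps $S_c$ away from $0$. Care is needed only to apply (ii) on the range $j\le\rank A(x)$ where the normalized symmetric functions are positive, and to dispose of the trivial case $J=\emptyset$ (statement vacuous) at the outset.
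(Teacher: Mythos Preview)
Your argument is correct and uses the same ingredients as the paper: split $J$ by the sign of the coefficients, compare the two sides of the rewritten identity via Maclaurin's inequalities, and finish with a connectedness argument. The one substantive difference is the choice of the index $k$. The paper takes $k=i_1:=\min J$ (which automatically lies in your $J^+$) rather than your $k=c=\min J^-$: dropping the other $J^+$-terms gives $S_{i_1}\le (\mu_2/\mu_1)\sum_{k}S_{j_k}$, and since every $j_k>i_1$ Maclaurin yields $S_{j_k}\le \alpha_k S_{i_1}^{j_k/i_1}$, hence $1\le (\mu_2/\mu_1)\sum_k\alpha_k S_{i_1}^{\nu_k-1}$ with all $\nu_k>1$, forcing $S_{i_1}\ge\sigma_0$ immediately. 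Because $i_1=\min J$, the implication ``$S_{i_1}\equiv0\Rightarrow S_i\equiv0$ for all $i\in J$'' is automatic, so your rank-gap step becomes unnecessary. Your route is perfectly valid but slightly longer; the paper's choice of the smallest index overall is what streamlines it. One small quibble: condition~(Q) does not assert that the $a_k$ are continuous, so ``nowhere zero'' does not by itself give a constant sign; the paper, however, makes the same tacit assumption.
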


\begin{proof}[Proof of Lemma~\ref{lem1}]
Note that $S_k(A(x))\ge0$ in virtue of the positive semi-definiteness of $A(x)$. Then, if all (non-identically zero) coefficients  $a_i$ have the same sign then $S_k(A(x))\equiv 0$ holds for any $i\in J$. Now suppose that there exists $x_0\in \R{n}$ and a number $k\in J$ such that $S_k(A(x_0))>0$. In that case, there exist  two coefficients $a_i$ having different signs. Observe that by the condition (Q) this also holds true in the whole $\R{n}$. Let us rewrite (\ref{eq6}) as
$$
|a_{i_1}(x_0)|S_{i_1}(A(x_0))+\ldots+|a_{i_m}(x_0)|S_{i_m}(A(x_0))=
|a_{j_1}(x_0)|S_{j_1}(A(x_0))+\ldots+|a_{j_p}(x_0)|S_{j_p}(A(x_0)),
$$
where $i_1<\ldots<i_m$, $j_1<\ldots< j_p$, and also $i_1<j_1$. We claim that $k=i_1$ satisfies the conclusion of the lemma. Indeed,  we have
\begin{equation}\label{eq7}
S_{i_1}(A(x_0))\le b_1S_{j_1}(A(x_0))+\ldots+b_p S_{j_p}(A(x_0)),
\end{equation}
where $b_k=|a_{j_k}(x_0)|/|a_{i_1}(x_0)|\le \mu_2/\mu_1$. Now, using Proposition~3.2.2 in \cite[p.~106]{Marcus}, we have
$$
\left(\frac{S_k(A(x_0))}{\binom{n}{k}}\right)^m\le \left(\frac{S_m(A(x_0))}{\binom{n}{m}}\right)^k,
$$
for any $1\le m\le k\le n$, therefore by (\ref{eq7})
$$
S_{i_1}(A(x_0))\le\frac{\mu_2}{\mu_1}\sum_{k=1}^{p}\alpha_k\cdot (S_{i_1}(A(x_0)))^{\nu_k},
$$
where $\nu_k=j_k/i_1>1$ and $\alpha_k=\binom{n}{j_k} \cdot \binom{n}{i_1}^{-\nu_k}$. Observe that the left hand side of the equation
$$
\frac{\mu_2}{\mu_1}\sum_{k=1}^{p}\alpha_k\cdot \sigma^{\nu_k-1}=1
$$
is an increasing function of $\sigma\ge0$, and let $\sigma=\sigma_0$ denote its (unique) positive root. Then in virtue of the positiveness of  $S_{i_1}(A(x_0))$ we conclude that $S_{i_1}(A(x_0))\ge \sigma_0$. By the continuity assumption on $A(x)$, the latter inequality also holds in the whole $\R{n}$ which proves the lemma.
\end{proof}

\begin{corollary}\label{cor1}
Let $f(x)\in C^2(\R{n})$ be a convex solution of $(\ref{eq2})$ under the condition (Q). Then  either $\det \Hess f\equiv 0$ in $\R{n}$ or there exists $k\in J$ such that the inequality
\begin{equation}\label{eq8}
S_k(\Hess f(x))\ge \sigma_0>0
\end{equation}
holds for all $x\in \R{n}$ with $k,\sigma_0$ chosen as in Lemma~\ref{lem1}.
\end{corollary}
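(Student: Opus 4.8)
The plan is to apply Lemma~\ref{lem1} directly to the matrix field $A(x)=\Hess f(x)$. Since $f$ is convex and $C^2$, the Hessian $\Hess f(x)$ is a continuous, positive semi-definite $n\times n$ matrix, so $A(x)\ge 0$; moreover equation~(\ref{eq2}) is literally equation~(\ref{eq6}) for this choice of $A$, and $L$ satisfies the structural condition (Q) by hypothesis. Lemma~\ref{lem1} then gives the dichotomy: either $S_i(\Hess f(x))\equiv 0$ for every $i\in J$, or there exist $k\in J$ and a constant $\sigma_0>0$ (depending only on $\mu_1$, $\mu_2$ and the binomial constants, exactly as in the lemma) such that $S_k(\Hess f(x))\ge\sigma_0$ for all $x\in\R{n}$. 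The second alternative is precisely~(\ref{eq8}), so in that case there is nothing more to prove.

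It remains to treat the first alternative and deduce $\det\Hess f\equiv 0$ from the vanishing of $S_i(\Hess f)$ for all $i\in J$. If $n\in J$ this is immediate, since $\det\Hess f=S_n(\Hess f)$. In general, choose any $i_0\in J$ — the set $J$ is nonempty because $L\not\equiv 0$ — so that $S_{i_0}(\Hess f(x))\equiv 0$. I would then invoke the same Maclaurin-type inequality used in the proof of Lemma~\ref{lem1}, namely Proposition~3.2.2 in \cite[p.~106]{Marcus}, with $m=i_0$ and $k=n$: since $\Hess f(x)\ge 0$ and $1\le i_0\le n$, one gets
$$
\bigl(S_n(\Hess f(x))\bigr)^{i_0}\le \binom{n}{i_0}^{-n}\bigl(S_{i_0}(\Hess f(x))\bigr)^{n}=0,
$$
hence $S_n(\Hess f(x))=\det\Hess f(x)=0$ for every $x$. (Equivalently, one can argue at the level of eigenvalues: $S_{i_0}(\Hess f(x))=0$ together with the nonnegativity of every term in the sum defining $S_{i_0}$ forces at most $i_0-1$ eigenvalues of $\Hess f(x)$ to be nonzero, i.e.\ $\rank\Hess f(x)\le i_0-1<n$.)

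There is no genuine obstacle here: the corollary is essentially a transcription of Lemma~\ref{lem1} to $A=\Hess f$. The only point that deserves attention — and the one I would be careful to spell out — is that $J$ need not contain the index $n$, so the implication ``$S_{i_0}(\Hess f)\equiv 0$ for some $i_0\in J$'' $\Rightarrow$ ``$\det\Hess f\equiv 0$'' is not tautological but passes through the Newton--Maclaurin inequalities, exploiting once more the positive semi-definiteness of $\Hess f$.
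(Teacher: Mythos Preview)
Your argument is correct and matches the paper's (implicit) reasoning: the corollary is stated there without proof, as an immediate specialization of Lemma~\ref{lem1} to $A(x)=\Hess f(x)$. Your extra care in deriving $\det\Hess f\equiv 0$ from $S_{i_0}(\Hess f)\equiv 0$ via the Maclaurin inequality (or the equivalent rank argument) is the one nontrivial step the paper leaves to the reader, and you handle it properly.
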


3. \textbf{Proof of the Theorem}. We claim that under the hypotheses of the theorem there holds $S_i (\Hess f(x))\equiv 0$ for any $i\in J$. Indeed, arguing by contradiction we have by Lemma~\ref{lem1} that (\ref{eq8}) holds in the whole $\R{n}$ for some $k\in J$. One can assume without loss of generality, replacing if needed $f(x)$ by $f(x)+c+\scal{a}{x}$, that $f(x)\ge0$ in $\R{n}$. Given an arbitrary $\epsilon>0$, the condition (\ref{eq5}) yields the existence of  a constant $p\in \R{}$ such that $f(x)\le \frac{\epsilon}{2}\|x\|^2+p$ for any $x\in \R{n}$. But $g(x)=\frac{\epsilon}{2}\|x\|^2-f(x)\to \infty$ uniformly as $x\to \infty$, hence it attains its minimum value at some point, say $x_0\in \R{n}$, and there holds
$$
\Hess g(x_0)=\Hess (\frac{\epsilon}{2}\|x\|^2-f(x))|_{x_0}\ge0,
$$
which yields $\Hess f(x_0)\le \epsilon I$ with $I$ being the unit matrix.

Since $\Hess f(x_0)\ge 0$ we obtain applying the majorization principle (see, for instance Corollary~4.3.3 in \cite{Horn}) that
$$
S_k(\Hess f(x_0))\le S_k(\epsilon I)=\epsilon^k\binom{n}{k}.
$$
But the assumption $S_k(\Hess f(x))\ge \sigma_0$ yields  easily a contradiction with the arbitrariness of the $\epsilon$. This proves our claim. In particular, in virtue of the convexity of $f$ we also have $\Hess f(x)\ge 0$, hence $\Hess f(x)$ has zero eigenvalues for any $x\in \R{n}$ implying $\det \Hess f(x)\equiv 0$ in $\R{n}$ (see also  Corollary~\ref{cor1}). If, additionally, $a_1(x)\not\equiv 0$ then $1\in J$ and the claim implies   $S_1(\Hess f(x))=\Delta f(x)\equiv 0$. Applying again the convexity of $f(x)$ easily yields that $\Hess f(x)\equiv 0$ in $\R{n}$, hence $f(x)$ is an affine function and finishes the proof of the theorem.

4. E x a m p l e. Let $\alpha(t)$ be a positive function, non-identically constant and such that $0<q\le \alpha(t)\le q^{-1}$ for some fixed $0<q<1$. Let us consider the function
$$
f(x_1,\ldots,x_n)=\sum_{i=1}^n\int_{0}^{x_i}(x_i-t)\alpha(t)dt.
$$
Then $\Hess f(x)=(\alpha(x_i)\delta_{ij})_{1\le i,j\le n}$, hence  $f(x)$ is convex and satisfies
$$
S_n(\Hess f(x))-\omega(x)S_1(\Hess f)=0
$$
with $\omega(x)=\alpha(x_1)\ldots\alpha(x_n)/\sum_{i=1}^n\alpha(x_i)$. We have $\frac{1}{n}q^{n+1}\le a_1(x)\le \frac{1}{n}q^{-n-1}$, which establishes that  $L$ satisfies the condition (Q). On the other hand,
$$
\frac{q}{2}\|x\|^2\le f(x)\le \frac{1}{2q}\|x\|^2,
$$
thus $f(x)$ has the quadratic  growth at infinity.

\def\cprime{$'$}


\begin{thebibliography}{1}

\bibitem{Borisenko}
A.~A. Borisenko.
\newblock On a {L}iouville-type theorem for the equation of special
  {L}agrangian submanifolds.
\newblock {\em Mat. Zametki}, 52(5):22--25, 140, 1992.

\bibitem{Ca1}
E.~Calabi.
\newblock Improper affine hyperspheres of convex type and a generalization of a
  theorem by {K}. {J}\"orgens.
\newblock {\em Michigan Math. J.}, 5:105--126, 1958.

\bibitem{Horn}
R.A. Horn and Ch.~R. Johnson.
\newblock {\em Matrix analysis}.
\newblock Cambridge University Press, Cambridge, 1990.
\newblock Corrected reprint of the 1985 original.

\bibitem{Jorg}
Konrad J{\"o}rgens.
\newblock \"{U}ber die {L}\"osungen der {D}ifferentialgleichung {$rt-s^2=1$}.
\newblock {\em Math. Ann.}, 127:130--134, 1954.

\bibitem{Marcus}
Marvin Marcus and Henryk Minc.
\newblock {\em A survey of matrix theory and matrix inequalities}.
\newblock Allyn and Bacon Inc., Boston, Mass., 1964.

\bibitem{Pogor64}
A.~V. Pogorelov.
\newblock {\em Monge-{A}mp\`ere equations of elliptic type}.
\newblock Translated from the first Russian edition by Leo F. Boron with the
  assistance of Albert L. Rabenstein and Richard C. Bollinger. P. Noordhoff
  Ltd., Groningen, 1964.

\end{thebibliography}
\end{document}